\newcommand{\lra}{\longrightarrow}
\begin{document}

\title{Polynilpotent Capability of Finitely Generated Abelian
Groups}
\author[Mashayekhy B et.~al.]{Mashayekhy Behrooz\affil{1}\comma\corrauth,
       Parvizi Mohsen\affil{1}, and Kayvanfar Saeed\affil{1}\comma\affil{2}}
 \address{\affilnum{1}\ Department of Pure Mathematics, Center of Excellence in Analysis on Algebraic Structures, Ferdowsi University of Mashhad, Mashhad, Iran. \\
           \affilnum{2}\ Institute for Studies in Theoretical Physics and Mathematics, Tehran, Iran.}
 \corraddr{Mashayekhy Behrooz, Department of Pure Mathematics, Center of Excellence in Analysis on Algebraic Structures, Ferdowsi University of Mashhad, P.O.Box 1159-91775, Mashhad, Iran.
          Email: \tt mashf@math.um.ac.ir}
\received{3 october~ 2009}

\newtheorem{thm}{Theorem}[section]
 \newtheorem{cor}[thm]{Corollary}
 \newtheorem{lem}[thm]{Lemma}
 \newtheorem{prop}[thm]{Proposition}
 \newtheorem{defn}[thm]{Definition}
 \newtheorem{rem}[thm]{Remark}
\newtheorem{Example}{Example}[section]

\begin{abstract}
In this paper we determine all finitely generated
abelian groups which are varietal capable with respect to the
variety of polynilpotent groups. This result is a vast
generalization of the famous Baer's result about capability of
finitely generated abelian groups.
\end{abstract}

\keywords{Varietal capability; Finitely generated Abelian groups; Polynilpotent variety; Baer invariant.}
\ams{20E34, 20E10, 20K01.}
\maketitle

\section{Introduction}
\label{sec1}
In 1938 Baer \cite{Baer} investigated the conditions for a group to be
the inner automorphism group of another group. Hall and Senior \cite{HallM} called
such groups \textit{capable}. P. Hall in his landmark paper \cite{HallP},
noted that characterization of capable groups is important in classifying
groups of prime power order. Baer \cite{Baer} succeeded to determine all
capable groups among the direct sums of cyclic groups, and hence
finitely generated abelian groups. The third author in his joint paper in 1997 \cite{Moghaddam}, generalized the notion
of capability to any variety of groups which was called $\mathcal{V}$-capability.

The aim of this work is determining
all finitely generated abelian groups which are varietal capable
with respect to the variety of polynilpotent groups of class row
$(c_1,c_2,\ldots,c_t)$, that is $\mathcal{N}_{c_1,c_2,\ldots,c_t}$.
This result is a vast generalization of
Baer's work in the case of finitely generated abelian groups.

\section{Preliminaries}
\label{sec2}
We shall assume that the reader is
familiar with the notion of variety of groups and the verbal and the
marginal subgroups. The notion of Baer invariant is also
considered to be known (see \cite{Moghaddam}).
The following definition is of several usage in this article.
\begin{defn}
Let $\mathcal{V}$ be any variety of groups defined by a set of laws $V$, and $G$ be any
group. $G$ is called $\mathcal{V}$-capable if there exists a group $E$
which satisfies $G\cong E/V^*(E)$,
where $V^*(E)$ is the marginal subgroup of $E$ with respect to $\mathcal{V}$.
\end{defn}
It should be noted that although two different sets of words can generate a variety but the marginal subgroups with respect to these sets of words coincide, so the terminology $\mathcal{V}$-capability has no ambiguity.
According to the above definition capable groups are $\mathcal{A}$-capable , where $\mathcal{A}$ is the variety of abelian groups.

The following definition and theorem are found in \cite{Moghaddam} and contains a
necessary and sufficient condition for a group to be
$\mathcal{V}$-capable.
\begin{defn}
Let $\mathcal{V}$
be any variety and $G$ be any group. Then $V^{**}(G)$ is defined as follows.
$$V^{**}(G)=\cap \{\psi(V^*(E))\ | \ \psi:E \stackrel{onto}\longrightarrow G \ , \ ker\psi\subseteq V^*(E)\}.$$
\end{defn}
\begin{thm}
With the above notations and assumptions $G/V^{**}(G)$ is the largest quotient of $G$ which is ${\cal
V}$-capable, and hence $G$ is $\mathcal{V}$-capable if and only if
$V^{**}(G)=1$.
\end{thm}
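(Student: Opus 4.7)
The plan is to split the biconditional into two halves: (i) $G/V^{**}(G)$ is itself $\mathcal{V}$-capable, and (ii) whenever $N\triangleleft G$ with $G/N$ $\mathcal{V}$-capable, one has $N\supseteq V^{**}(G)$. The ``$G$ is $\mathcal{V}$-capable iff $V^{**}(G)=1$'' statement then drops out by taking $N=1$.

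For (ii) I would build a witness by a pullback. Assume $\theta\colon G/N \to H/V^*(H)$ is an isomorphism and set
$$E \,=\, \{(g,h)\in G\times H : \theta(gN)=hV^*(H)\}.$$
Both projections $p_1\colon E\to G$ and $p_2\colon E\to H$ are surjective. Because $E$ is a subgroup of a direct product, every outer word is evaluated on $E$ coordinate-wise; this immediately gives $\ker p_1=\{1\}\times V^*(H)\subseteq V^*(E)$, and by the standard fact that surjections send marginal subgroups into marginal subgroups it also gives $p_2(V^*(E))\subseteq V^*(H)$. Pushing the latter through the isomorphism $\theta$ forces $p_1(V^*(E))\subseteq N$. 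Hence $(E,p_1)$ is admissible in the definition of $V^{**}(G)$ and contributes $p_1(V^*(E))\subseteq N$, whence $V^{**}(G)\subseteq N$.

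For (i) I would intersect over \emph{all} admissible epimorphisms $\psi_i\colon E_i\to G$ (with $\ker\psi_i\subseteq V^*(E_i)$) by forming their fibre product over $G$:
$$D \,=\, \bigl\{(x_i)_{i\in I}\in\textstyle\prod_i E_i : \psi_i(x_i)\text{ is independent of }i\bigr\},$$
together with the common map $\Psi\colon D\to G$. Coordinate-wise word evaluation yields $\ker\Psi=\prod_i\ker\psi_i\subseteq V^*(D)$ (each factor already lies in $V^*(E_i)$), and surjectivity of each projection $D\to E_i$ yields $\Psi(V^*(D))\subseteq\bigcap_i\psi_i(V^*(E_i))=V^{**}(G)$. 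The crucial reverse inclusion I would obtain by a lifting argument: given $g\in V^{**}(G)$, for each $i$ choose $x_i\in V^*(E_i)$ with $\psi_i(x_i)=g$; then $(x_i)\in D$, coordinate-wise evaluation once more places $(x_i)$ in $V^*(D)$, and $\Psi((x_i))=g$. Consequently $\Psi(V^*(D))=V^{**}(G)$ and, since $\ker\Psi\subseteq V^*(D)$, the map $\Psi$ descends to an isomorphism $D/V^*(D)\cong G/V^{**}(G)$, exhibiting the desired capability.

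The hard part is the set-theoretic obstruction in (i): the class of admissible pairs $(E_i,\psi_i)$ is a proper class, so the product defining $D$ is not \emph{a priori} a set. I would circumvent this either by restricting to a set of isomorphism-class representatives of cardinality bounded in terms of $|G|$, or by working with a single universal construction (for instance, one arising from a free group of appropriate rank mapping onto $G$) that realises the whole intersection. Either route requires checking that the restriction does not shrink $V^{**}(G)$; this is automatic for the lifting step in (i), since it uses only one admissible pair per element $g$.
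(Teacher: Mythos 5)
The paper does not prove this statement at all: Theorem 2.3 is imported verbatim from reference [6] (Moghaddam--Kayvanfar), so there is no in-paper proof to compare against. Judged on its own merits, your argument is correct and is the standard one for results of this type (it parallels the classical treatment of the epicentre $Z^*(G)$ in the abelian case): the pullback $E=\{(g,h):\theta(gN)=hV^*(H)\}$ does satisfy $\ker p_1=\{1\}\times V^*(H)\subseteq V^*(E)$ and $p_1(V^*(E))\subseteq N$, giving maximality, and the fibre product $D$ with the lifting argument gives $\Psi(V^*(D))=V^{**}(G)$ and hence $D/V^*(D)\cong G/V^{**}(G)$. Two small points. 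First, the set-theoretic repair is easier than you suggest: you do not need to bound the size of the groups $E_i$ (which is impossible, since $V^*(E)$ can be arbitrarily large); it suffices to observe that the subgroups $\psi(V^*(E))$ all lie in the set of subgroups of $G$, so one admissible pair can be chosen for each value actually attained, and the fibre product is then taken over that set-indexed family, which realises the same intersection. Second, the phrase ``every outer word is evaluated coordinate-wise'' should refer to arbitrary words of the defining set $V$, not outer commutator words --- the coordinate-wise evaluation holds for all word maps on a direct product, which is what the marginality checks $\{1\}\times V^*(H)\subseteq V^*(E)$ and $\prod_i V^*(E_i)\cap D\subseteq V^*(D)$ actually need. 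Neither point affects the validity of the proof.
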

The following theorem and its consequences state relationship
between $\mathcal{V}$-capability and Baer invariants.
\begin{thm} (\cite{Moghaddam}).
Let $\mathcal{V}$ be any variety
of groups, $G$ be any group, and $N$ be a normal subgroup of $G$
contained in the marginal subgroup $V^*(G)$. Then the natural homomorphism
$\mathcal{V}M(G)\longrightarrow \mathcal{V}M(G/N)$ is injective if and
only if $N\subseteq V^{**}(G)$, where $\mathcal{V}M(G)$ is the Baer invariant of $G$ with respect to $\mathcal{V}$.
\end{thm}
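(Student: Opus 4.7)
The plan is to work with a free presentation $G = F/R$ and write $N = S/R$ with $R \le S \trianglelefteq F$; the hypothesis $N \subseteq V^*(G)$ translates into $[S\,V^*F] \subseteq R$. Using the Hopf-type description $\mathcal{V}M(H) = (V(F)\cap K)/[K\,V^*F]$ for $H=F/K$, the natural homomorphism $\mathcal{V}M(G)\to \mathcal{V}M(G/N)$ is induced by the inclusion $R\hookrightarrow S$. A short direct computation, using $[S\,V^*F]\subseteq R\cap V(F)$, identifies its kernel as $[S\,V^*F]/[R\,V^*F]$. Thus the first reformulation of the theorem is: the map is injective if and only if $[S\,V^*F]=[R\,V^*F]$.

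Next I would give a concrete description of $V^{**}(G)$. Consider the ``universal'' admissible extension $E_0 := F/[R\,V^*F]\twoheadrightarrow G$; its kernel $R/[R\,V^*F]$ automatically lies in $V^*(E_0)$, so $E_0$ is one of the groups in the intersection defining $V^{**}(G)$. Let $T\supseteq R$ be the preimage in $F$ of $V^*(E_0)$, equivalently $T=\{f\in F:[fV^*F]\subseteq [R\,V^*F]\}$. The goal is to show $V^{**}(G)=T/R$. The inclusion $V^{**}(G)\subseteq T/R$ is immediate from the definition. For the reverse inclusion, pick any $\psi:E\twoheadrightarrow G$ with $\ker\psi\subseteq V^*(E)$, and lift the presentation to $\phi:F\to E$ using freeness of $F$. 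Since $\phi(R)\subseteq \ker\psi\subseteq V^*(E)$, marginality forces $\phi([R\,V^*F])=1$, and then for any $t\in T$ the containment $[tV^*F]\subseteq [R\,V^*F]$ together with the same marginality argument shows that $\phi(t)$ satisfies the marginal property inside $E$, i.e.\ $\phi(t)\in V^*(E)$. Hence $\psi(V^*(E))\supseteq \psi(\phi(T))=T/R$, yielding the desired equality.

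Combining the two reformulations finishes the proof:
\[
\mathcal{V}M(G)\hookrightarrow \mathcal{V}M(G/N)\ \Longleftrightarrow\ [S\,V^*F]=[R\,V^*F]\ \Longleftrightarrow\ S/[R\,V^*F]\subseteq V^*(E_0)\ \Longleftrightarrow\ S\subseteq T\ \Longleftrightarrow\ N\subseteq V^{**}(G).
\]
The main obstacle is the reverse containment $T/R\subseteq V^{**}(G)$ in the middle step: because $V^*$ is not functorial, one cannot simply transport $V^*(E_0)$ along $\phi$. The rescue is the hypothesis $\ker\psi\subseteq V^*(E)$, which lets the marginality of $\phi(R)$ ``absorb'' the ambiguity in the lift and upgrade the conclusion $\phi(T)\subseteq V^*(E)$; verifying this carefully, directly from the defining identities of the verbal and marginal subgroups, is the technical heart of the argument.
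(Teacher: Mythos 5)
The paper does not prove this statement at all; it is quoted from the reference [Moghaddam--Kayvanfar] and used as a black box, so there is no in-paper argument to compare against. Your outline is essentially the standard proof from that source, and I find it correct. Both halves check out: with $G=F/R$, $G/N=F/S$, the hypothesis $N\subseteq V^*(G)$ is exactly $[S\,V^*F]\subseteq R$, which places $[S\,V^*F]$ inside $R\cap V(F)$ and identifies the kernel of $\mathcal{V}M(G)\to\mathcal{V}M(G/N)$ as $[S\,V^*F]/[R\,V^*F]$; and the identification $V^{**}(G)=T/R$ with $T$ the preimage of $V^*\bigl(F/[R\,V^*F]\bigr)$ is the right universal description. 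The final chain of equivalences is then immediate, since $S\subseteq T$ is literally $[S\,V^*F]\subseteq[R\,V^*F]$. The one step you should write out in full is the claim $\phi(t)\in V^*(E)$ for $t\in T$: the marginal condition must be verified against arbitrary $e_1,\dots,e_n\in E$, not just elements of $\phi(F)$, and this requires decomposing $e_j=\phi(f_j)k_j$ with $k_j\in\ker\psi\subseteq V^*(E)$ and stripping the marginal factors $k_j$ (using normality of $V^*(E)$ to move them past $\phi(t)$) before applying $\phi([t\,V^*F])\subseteq\phi([R\,V^*F])=1$. You flag this as the technical heart without carrying it out, but the mechanism you name is exactly the one that works, so I see no gap in the approach.
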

In finite case the following corollary is useful to be applied.
\begin{cor} (\cite{Parvizi}).
Let $\mathcal{V}$ be any variety
and $G$ be any group with $V(G)=1$. Assume that $\mathcal{V}M(G)$ and
$\mathcal{V}M(G/N)$ are finite groups for a normal subgroup $N$ of $G$,
then the natural homomorphism $\mathcal{V}M(G)\longrightarrow \mathcal{V}M(G/N)$ is
injective if and only if $|\mathcal{V}M(G/N)|=|\mathcal{V}M(G)|$.
\end{cor}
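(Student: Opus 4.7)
The plan is to reduce the corollary to the single observation that, under the hypothesis $V(G)=1$, the natural homomorphism $\mathcal{V}M(G)\to\mathcal{V}M(G/N)$ is always \emph{surjective}. Once surjectivity is in hand, the counting identity $|\mathcal{V}M(G)|=|\ker|\cdot|\mathcal{V}M(G/N)|$, valid for any surjection of finite groups, shows at once that the kernel is trivial if and only if the two orders agree; this is precisely the stated equivalence, so both directions of the ``if and only if'' reduce to establishing surjectivity.

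To establish surjectivity, I would fix a free presentation $1\to R\to F\to G\to 1$ and write $N=S/R$, where $S$ is a normal subgroup of $F$ containing $R$, so that $F/S\cong G/N$ is a free presentation of the quotient. The Hopf-type formula for the Baer invariant gives, for any free presentation $H=F'/R'$,
$$\mathcal{V}M(H)\ =\ \frac{V(F')\cap R'}{[R'\,{}^{*}F']_{V}},$$
where $[R'\,{}^{*}F']_{V}$ denotes the associated Baer commutator subgroup, with the monotonicity property $[R\,{}^{*}F]_{V}\leq[S\,{}^{*}F]_{V}$ whenever $R\leq S$. The natural homomorphism on Baer invariants, induced by the identity of $F$, is then
$$\frac{V(F)\cap R}{[R\,{}^{*}F]_{V}}\ \longrightarrow\ \frac{V(F)\cap S}{[S\,{}^{*}F]_{V}},$$
defined by the inclusion on numerators.

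The hypothesis $V(G)=1$ translates, via $V(G)=V(F)R/R$, to $V(F)\leq R$, and therefore to $V(F)\leq S$ as well. Consequently $V(F)\cap R=V(F)=V(F)\cap S$, so the two quotients above share a common numerator and the displayed map is visibly surjective. The corollary then follows from the counting argument of the first paragraph. The main technical point---and the principal obstacle, if there is one---is checking that the abstract natural map $\mathcal{V}M(G)\to\mathcal{V}M(G/N)$ really does coincide, under the chosen presentations, with the inclusion-induced map written above; but this is a standard verification in the theory of Baer invariants and introduces no new ingredient beyond the Hopf-type formula itself.
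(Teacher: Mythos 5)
Your proof is correct and is essentially the standard argument for this result (which the paper only cites from \cite{Parvizi} without reproducing): the hypothesis $V(G)=1$ forces $V(F)\leq R\leq S$ in any free presentation, so the numerators $V(F)\cap R$ and $V(F)\cap S$ in the Hopf-type formula coincide, the natural map is surjective, and the order count finishes both directions. The functoriality check you flag at the end is indeed routine, so no gap remains.
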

The following result is a consequence of Theorem 2.4.
\begin{cor}
An abelian group $G$ is
${\cal V}$-capable if and only if the natural homomorphism ${\cal
V}M(G)\longrightarrow {\cal V}M(G/<x>)$ has a
non-trivial kernel for all non-identity elements $x$ in $G$.
\end{cor}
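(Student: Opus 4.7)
The plan is to read this corollary off directly from Theorem 2.4 together with the capability criterion of Theorem 2.3. For an abelian group $G$, every cyclic subgroup $\langle x\rangle$ is normal in $G$, and in the setting of the paper (polynilpotent, or any variety containing the abelian variety), $G$ itself satisfies the laws of $\mathcal{V}$, so $V(G)=1$ and consequently $V^*(G)=G$. In particular $\langle x\rangle\subseteq V^*(G)$ for every $x\in G$, which is exactly the hypothesis needed to apply Theorem 2.4 with $N=\langle x\rangle$.

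Applying Theorem 2.4 gives that the natural homomorphism $\mathcal{V}M(G)\longrightarrow \mathcal{V}M(G/\langle x\rangle)$ is injective if and only if $\langle x\rangle\subseteq V^{**}(G)$, i.e., if and only if $x\in V^{**}(G)$. Taking contrapositives, the map has a non-trivial kernel exactly when $x\notin V^{**}(G)$. Quantifying over all non-identity $x\in G$, the condition that the kernel is non-trivial for every $x\neq 1$ is therefore equivalent to the statement that $V^{**}(G)$ contains no non-identity element, i.e., $V^{**}(G)=1$; by Theorem 2.3 this is in turn equivalent to $\mathcal{V}$-capability of $G$.

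There is essentially no obstacle in this argument: the corollary is a direct packaging of earlier results in the cyclic case, running entirely through the set-level identity ``$V^{**}(G)=1 \iff $ no $x\neq 1$ lies in $V^{**}(G)$''. The only point worth a moment's attention is the containment $\langle x\rangle\subseteq V^*(G)$ required to invoke Theorem 2.4 for each $x$, which is automatic in the abelian setting once $G$ lies in the variety $\mathcal{V}$.
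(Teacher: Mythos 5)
Your proof is correct and is exactly the derivation the paper intends: the paper states the corollary without proof, noting only that it is "a consequence of Theorem 2.4," and your argument (reduce to $\langle x\rangle\subseteq V^{**}(G)$ via Theorem 2.4, then quantify over $x\neq 1$ and invoke Theorem 2.3) is the standard way to fill that in. Your observation that the hypothesis $\langle x\rangle\subseteq V^{*}(G)$ needs $G\in\mathcal{V}$ (automatic for the polynilpotent varieties used in the paper) is a legitimate and correctly handled point.
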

In order to use Theorem 2.4 we need the explicit
structure of the Baer invariants of finitely generated abelian
groups with respect to
the variety of polynilpotent groups. The following give us the structure.
\begin{thm} (\cite{Mashayekhy}).
Let ${\cal
N}_{c_1,c_2,\ldots,c_t}$ be the polynilpotent variety of class row
$(c_1,c_2,\ldots,c_t)$ and $G\cong \mathbb{Z}^{(m)}\oplus \mathbb{Z}_{n_1}\oplus\ldots\oplus \mathbb{Z}_{n_k}$ be a finitely generated
abelian group, where $n_{i+1}\mid n_i$ for all $1\leq i \leq k-1$.
Then an explicit structure of the polynilpotent multiplier of $G$ is
as follows.
$$\mathcal{N}_{c_1,c_2,\ldots,c_t}M(G)\cong \mathbb{Z}^{(f_m)}\oplus \mathbb{Z}_{n_1}^{(f_{m+1}-f_m)}\oplus\ldots\oplus
\mathbb{Z}_{n_k}^{(f_{m+k}-f_{m+k-1})},$$ where
$f_i=\chi_{c_t+1}(\chi_{c_{t-1}+1}(\ldots(\chi_{c_1+1}(i))\ldots))$
for all $m\leq i \leq m+k$.
\end{thm}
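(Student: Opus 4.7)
The plan is to proceed by induction on the length $t$ of the class row, treating the case of the ordinary nilpotent multiplier ($t=1$) as the base and using the iterated commutator description of the polynilpotent verbal subgroup to pass from $t-1$ to $t$. For $t=1$ the variety is $\mathcal{N}_{c_1}$ and the numerics are governed by the Witt function $\chi_{c_1+1}(n)=\frac{1}{c_1+1}\sum_{d\mid c_1+1}\mu(d)n^{(c_1+1)/d}$, i.e.\ the rank of the degree-$(c_1+1)$ homogeneous component of the free Lie ring on $n$ generators. The base case is handled by applying the standard Hopf-type Baer-invariant formula to a free presentation of $G$, using the Magnus--Witt basic-commutator basis to settle the claim first for $G=\mathbb{Z}^{(n)}$ and then transferring to an arbitrary finitely generated abelian $G$ via a K\"unneth/Schur--Baer splitting; in that splitting each cyclic summand of $G$ contributes one letter to the alphabet on which basic commutators are enumerated, so the torsion of each $\mathbb{Z}_{n_j}$ appears in precisely those commutators whose alphabet first includes the $j$-th letter.

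For the inductive step I would exploit that, for the polynilpotent variety, the verbal subgroup of a free group $F$ is the iterated commutator
$$V(F)=\gamma_{c_t+1}\bigl(\gamma_{c_{t-1}+1}(\ldots(\gamma_{c_1+1}(F))\ldots)\bigr),$$
so that $\mathcal{N}_{c_1,\ldots,c_t}M(G)$ can be rewritten as the $c_t$-nilpotent multiplier of the polynilpotent quotient sitting one level down in the tower. Concretely, one reorganises the Hopf-type expression for $\mathcal{N}_{c_1,\ldots,c_t}M(G)$ into an inner instance of $\mathcal{N}_{c_t}M$ applied to a finitely generated abelian group whose rank and invariant factors are, by the induction hypothesis, prescribed by $f_i^{(t-1)}=\chi_{c_{t-1}+1}(\ldots(\chi_{c_1+1}(i))\ldots)$. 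Feeding this into the base-case formula produces the outermost $\chi_{c_t+1}$, which is exactly the recursion $f_i=\chi_{c_t+1}(f_i^{(t-1)})$ built into the statement; the torsion multiplicities come out in the differenced form $f_{m+j}-f_{m+j-1}$ as a direct consequence of how the Witt count grows when one additional letter is adjoined to the alphabet.

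The main obstacle is to justify this interchange between iteration of verbal subgroups inside the ambient free group $F$ and iteration of the Baer-invariant functor on its successive abelian quotients. Establishing it requires a careful five-term/Stallings-type analysis for the normal subgroup $\mathcal{N}_{c_1,\ldots,c_{t-1}}(F)$ of $F$, showing that the relevant intermediate quotient really is finitely generated abelian and that its invariant-factor data agree with what the inductive hypothesis predicts; otherwise the outer $\chi_{c_t+1}$ would be applied to the wrong integer and the recursion would collapse. Once that compatibility is secured, the essential combinatorics is entirely concentrated in the base case, and the inductive step reduces to bookkeeping on the alphabet of basic commutators.
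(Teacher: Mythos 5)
This statement (Theorem 2.7) is imported verbatim from \cite{Mashayekhy} and the present paper gives no proof of it at all, so there is no internal argument to measure your proposal against; the comparison has to be with the strategy of the cited reference. Against that standard, what you have written is a plan rather than a proof, and the plan's load-bearing step is exactly the one you defer. Your inductive step rests on rewriting $\mathcal{N}_{c_1,\ldots,c_t}M(G)$ as $\mathcal{N}_{c_t}M$ of ``the polynilpotent quotient one level down,'' i.e.\ essentially on a composition law $\mathcal{N}_{c_1,\ldots,c_t}M \cong \mathcal{N}_{c_t}M\circ\mathcal{N}_{c_1,\ldots,c_{t-1}}M$ on finitely generated abelian groups. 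The numerics of the stated formula are indeed consistent with such a law, but the Baer invariant is not functorially composable in this naive sense, and no such reduction is available off the shelf. The actual argument has to be run at the level of a fixed free presentation $G\cong F/R$: one works with $Y=\gamma_{c_{t-1}+1}(\cdots\gamma_{c_1+1}(F)\cdots)$, uses that $Y$ is itself free (Nielsen--Schreier), and proves an iterated basic-commutator basis theorem for $\gamma_{c_t+1}(Y)$ modulo the correct denominator $[R\,V^{*}F]$ --- identifying that denominator for the iterated word, and showing the surviving generators are the weight-$(c_t+1)$ basic commutators on the lower-level basic commutators with orders determined by the largest-index letter occurring. You name this obstacle yourself (``the main obstacle is to justify this interchange''), but naming it is not overcoming it: until that compatibility is proved, applying the outer $\chi_{c_t+1}$ to $f_i^{(t-1)}$ is unjustified and the induction does not close.

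Two smaller points. First, your base case $t=1$ --- the $c$-nilpotent multiplier of an arbitrary finitely generated abelian group --- is itself a nontrivial prior theorem; for $c\geq 2$ there is no clean K\"unneth splitting of $\mathcal{N}_cM(A\oplus B)$ into multipliers of the factors plus a tensor term, and the correct statement is obtained precisely by the alphabet/basic-commutator bookkeeping you describe, which needs the $\gamma_{c+1}(F)/[R,\,_cF]$ analysis to be carried out, not merely invoked. Second, your combinatorial picture for the torsion (each $\mathbb{Z}_{n_j}$ attached to the commutators whose highest-indexed letter is the $j$-th, giving multiplicities $f_{m+j}-f_{m+j-1}$) is the right one and matches the structure of the formula; that part of the proposal is sound in spirit. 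But as submitted, the argument has a genuine gap at its central step, and the result should simply be cited from \cite{Mashayekhy} unless that step is supplied.
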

To use Theorem 2.4 and Corollary
2.5, we need the structure of subgroups of finitely generated
abelian groups as follows.
\begin{thm} (\cite{Parvizi}).
Let  $G\cong \mathbb{Z}^{(m)}\oplus \mathbb{Z}_{n_1}\oplus\ldots\oplus \mathbb{Z}_{n_k}$ be a
finitely generated abelian group, where $n_{i+1}\mid n_i$ for all
$1\leq i \leq k-1$ and $H\leq G$ be a finite subgroup of $G$. Then
$H\cong \mathbb{Z}_{m_1}\oplus\ldots\oplus \mathbb{Z}_{m_k}$, where
$m_{i+1}\mid m_i$ for all $1\leq i \leq k-1$
and $m_i\mid n_i$ for all $1\leq i \leq k$.
\end{thm}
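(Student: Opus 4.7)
The plan is to reduce the statement to a question about finite abelian $p$-groups and then handle that case by induction on the largest elementary divisor.

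First, since $\mathbb{Z}^{(m)}$ is torsion-free and $H$ is finite, $H$ is contained in the torsion subgroup $T(G)=\mathbb{Z}_{n_1}\oplus\cdots\oplus\mathbb{Z}_{n_k}$, so I may assume $m=0$ and $G=T(G)$. For each prime $p$, let $p^{a_i^{(p)}}$ be the $p$-part of $n_i$; the hypothesis $n_{i+1}\mid n_i$ forces $a_{i+1}^{(p)}\leq a_i^{(p)}$, and the Sylow $p$-subgroup of $G$ is $G_p=\mathbb{Z}_{p^{a_1^{(p)}}}\oplus\cdots\oplus\mathbb{Z}_{p^{a_k^{(p)}}}$. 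Any subgroup $H$ splits as $H=\bigoplus_{p}H_p$ with $H_p\leq G_p$, and since invariant factors are multiplicative across primes it suffices to prove the following $p$-primary version: if $a_1\geq\cdots\geq a_k\geq 0$ and $H_p\leq G_p=\mathbb{Z}_{p^{a_1}}\oplus\cdots\oplus\mathbb{Z}_{p^{a_k}}$, then $H_p\cong\mathbb{Z}_{p^{b_1}}\oplus\cdots\oplus\mathbb{Z}_{p^{b_k}}$ with $b_1\geq\cdots\geq b_k\geq 0$ and $b_i\leq a_i$ for all $i$.

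I would prove this $p$-primary statement by induction on $a_1$. The base case $a_1=0$ is trivial. For the inductive step, apply the hypothesis to $pH_p\leq pG_p$: the ambient group $pG_p$ has invariant exponents $\max(a_i-1,0)$ with largest entry $a_1-1$, while $pH_p\cong\mathbb{Z}_{p^{\max(b_1-1,0)}}\oplus\cdots\oplus\mathbb{Z}_{p^{\max(b_k-1,0)}}$, so induction gives $\max(b_i-1,0)\leq\max(a_i-1,0)$ for every $i$. This yields $b_i\leq a_i$ whenever $a_i\geq 1$. The indices with $a_i=0$ are controlled by the classical Smith-normal-form fact that any subgroup of a $k$-generated abelian group is itself at most $k$-generated: the number of positive $b_i$'s is at most the number of positive $a_i$'s, so $b_i=0$ wherever $a_i=0$. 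Reassembling over primes by setting $m_i=\prod_p p^{b_i^{(p)}}$ then produces the invariant-factor decomposition of $H$ with $m_{i+1}\mid m_i$ and $m_i\mid n_i$ as required.

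The main obstacle I anticipate is the bookkeeping inside the inductive step: one has to combine the quantitative bound coming from multiplication by $p$ with the qualitative generator bound, and verify that together they cover every index $i$ (both the indices with $a_i\geq 1$ and those with $a_i=0$) simultaneously. Once that accounting is done, the primary decomposition reassembles the $m_i$ routinely, and both divisibilities $m_{i+1}\mid m_i$ and $m_i\mid n_i$ hold prime by prime.
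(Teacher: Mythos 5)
The paper does not actually prove this statement: Theorem~2.8 is quoted from \cite{Parvizi} and used as a black box, so there is no in-paper argument to compare yours against. Judged on its own, your proof is correct and complete. The reduction of $H$ into the torsion subgroup, the passage to Sylow $p$-components, and the reassembly of the invariant factors via $m_i=\prod_p p^{b_i^{(p)}}$ are all standard and correctly executed; the heart of the matter is the $p$-primary domination claim, and your induction on $a_1$ handles it properly. The two points that needed care are exactly the ones you address: (a) multiplication by $p$ preserves the non-increasing order of the exponent sequences, so the inductive hypothesis applied to $pH_p\leq pG_p$ really does compare the $i$-th invariants termwise and yields $b_i\leq a_i$ for every index with $a_i\geq 1$; and (b) the indices with $a_i=0$ are not seen by that step and must be killed separately by the fact that a subgroup of a $k$-generated abelian group is at most $k$-generated (equivalently, $\dim_{\mathbb{F}_p}(H_p/pH_p)\leq\dim_{\mathbb{F}_p}(G_p/pG_p)$). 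With both in place the argument is airtight, and it gives a self-contained elementary proof of a result the paper only cites.
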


\section{Main Results}
First of all we state two theorems of Baer, Burns and Ellis, which
determine all $\mathcal{N}_c$-capable finitely generated abelian
groups, in which $\mathcal{N}_c$ is the variety of nilpotent groups
of class at most $c$. The case $c=1$ is permissible and refers to
capability.\\
The following theorem is not just what Baer proved, but a
conclusion of it as a special case.
\begin{thm}
(R. Baer \cite{Baer}). Let $G\cong \mathbb{Z}^{(m)}\oplus \mathbb{Z}_{n_1}\oplus\ldots\oplus \mathbb{Z}_{n_k}$ be a
finitely generated abelian group, where $n_{i+1}\mid n_i$ for all
$1\leq i \leq k-1$. Then $G$ is capable
precisely when \\
 $(i)$ $m\geq 2$; or\\
$(ii)$ $m=0$, $k\geq 2$ and $n_1=n_2$.
\end{thm}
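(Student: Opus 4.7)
The strategy is to apply Corollary~2.6 with $\mathcal{V}=\mathcal{A}$ the variety of abelian groups (the polynilpotent variety with $t=1$, $c_1=1$): $G$ is capable precisely when the natural homomorphism $\mathcal{A}M(G)\to\mathcal{A}M(G/\langle x\rangle)$ has non-trivial kernel for every non-identity $x\in G$. Since $G$ is abelian one has $V(G)=G'=1$, so whenever both multipliers involved are finite, Corollary~2.5 reduces injectivity of the natural map to the numerical equality $|\mathcal{A}M(G)|=|\mathcal{A}M(G/\langle x\rangle)|$. Specialising Theorem~2.7 to this case gives $f_i=\binom{i}{2}$ and the explicit formula
\[
\mathcal{A}M(G)\cong \mathbb{Z}^{(\binom{m}{2})}\oplus\bigoplus_{i=1}^{k}\mathbb{Z}_{n_i}^{(m+i-1)}.
\]

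For the sufficiency of (i), let $m\geq 2$ and let $x\in G$ be non-identity. If the projection of $x$ onto $\mathbb{Z}^{(m)}$ is non-zero, the torsion-free rank of $G/\langle x\rangle$ is $m-1$, so the free part of its multiplier has rank $\binom{m-1}{2}<\binom{m}{2}$ and the map cannot be injective. If instead $x$ lies in the torsion subgroup $T$, the free part of the multiplier is preserved and the map restricts to a homomorphism between the finite torsion subgroups. Writing the invariant factors of $T/\langle x\rangle$ as $m_1,\dots,m_k$ (padded with $1$'s) and using an analogue of Theorem~2.8 for quotients (which follows from Smith normal form, or from Pontryagin duality since $T$ is finite) to conclude $m_i\mid n_i$, the strict inequality $\prod m_i<\prod n_i$ forces some $m_j<n_j$; because $m\geq 1$ all exponents $m+j-1$ are at least $1$, giving $\prod m_i^{m+i-1}<\prod n_i^{m+i-1}$ and hence non-trivial kernel on the torsion. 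Part (ii) is handled by the same comparison, with one extra step: if $m=0$ and $n_1=n_2$, one must rule out that $m_j=n_j$ for every $j\geq 2$ (the index $j=1$ carries exponent $0$ and contributes nothing). But if this were the case, $m_2\mid m_1$ together with $m_2=n_2=n_1$ would force $m_1\geq n_1$, hence $m_1=n_1$, contradicting $\prod m_i<\prod n_i$.

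For necessity I produce, in each excluded case, a single $x$ for which the two multipliers have equal (finite) order, so that Corollary~2.5 yields injectivity of the natural map. If $m=1$, take $x=(n_1,0,\dots,0)$ when $k\geq 1$ and $x=1\in\mathbb{Z}$ when $k=0$; then $G/\langle x\rangle\cong\mathbb{Z}_{n_1}\oplus\mathbb{Z}_{n_1}\oplus\mathbb{Z}_{n_2}\oplus\cdots\oplus\mathbb{Z}_{n_k}$ and the explicit formula of Theorem~2.7 yields the common order $n_1 n_2^2\cdots n_k^k$. If $m=0$ and $k=1$, any generator $x$ gives $G/\langle x\rangle=1$ and both multipliers vanish. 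If $m=0$, $k\geq 2$ and $n_1>n_2$, take $x=(n_2,0,\dots,0)$; then $G/\langle x\rangle\cong\mathbb{Z}_{n_2}\oplus\mathbb{Z}_{n_2}\oplus\mathbb{Z}_{n_3}\oplus\cdots\oplus\mathbb{Z}_{n_k}$ and Theorem~2.7 again gives the common order $n_2 n_3^2\cdots n_k^{k-1}$.

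The main obstacle is the invariant-factor bookkeeping in the sufficiency direction: one must be sure that the deficit $\prod m_i<\prod n_i$ is not supported at the single index $i=1$, where the exponent $m+i-1$ coming out of Theorem~2.7 vanishes. It is precisely here that the hypothesis $m\geq 2$ in (i) and the hypothesis $n_1=n_2$ in (ii) enter the argument, through the divisibility chain $m_{i+1}\mid m_i$ combined with $m_i\mid n_i$.
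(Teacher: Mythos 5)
Your argument is correct, and all the numerical bookkeeping checks out: with $\mathcal{V}=\mathcal{A}$ (the case $t=1$, $c_1=1$ of Theorem 2.7) one has $f_i=i(i-1)/2$, hence $\mathcal{A}M(G)\cong \mathbb{Z}^{(m(m-1)/2)}\oplus\bigoplus_{i=1}^{k}\mathbb{Z}_{n_i}^{(m+i-1)}$, and your rank comparison for elements with non-zero free part, your torsion-order comparison for torsion elements, and your three explicit witnesses in the necessity direction all give the stated equalities and strict inequalities. The point worth flagging is that the paper itself offers \emph{no} proof of this theorem: it is quoted from Baer's 1938 paper as known, and the authors even remark it is only a special case of what Baer proved. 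What you have produced is therefore not a comparison target but a rederivation of Baer's result from the paper's own machinery (Corollaries 2.5 and 2.6, Theorems 2.7 and 2.8); this is exactly the method the authors deploy in Lemmas 3.3--3.6 and sketch for Theorems 3.7 and 3.8, which contain Baer's theorem as the instance $\mathcal{N}_1=\mathcal{A}$. So your route is the paper's route for its generalizations, applied backwards to the classical case; what it buys is a uniform proof that makes transparent why the hypotheses $m\geq 2$ and $n_1=n_2$ enter (precisely to prevent the order deficit of the quotient from being concentrated at the index whose exponent $m+i-1$ vanishes), at the cost of resting on the nontrivial inputs Theorem 2.4 and Theorem 2.7 rather than on Baer's elementary construction. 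Two minor blemishes, neither fatal: the degenerate case $m=k=0$ (the trivial group is vacuously capable by Corollary 2.6 but satisfies neither (i) nor (ii)) is an imprecision already present in the statement, not in your proof; and in the necessity step for $m=1$, $k\geq 1$ you should note explicitly that the invariant-factor list of $G/\langle x\rangle$ is $n_1,n_1,n_2,\ldots,n_k$ of length $k+1$, so that the exponent pattern $0,1,2,\ldots,k$ really does reproduce the order $n_1n_2^2\cdots n_k^k$ of $\mathcal{A}M(G)$.
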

\begin{thm}
(J. Burns and G. Ellis \cite{Burns}). A
finitely generated abelian group is ${\cal N}_c$-capable if and
only if it is capable.
\end{thm}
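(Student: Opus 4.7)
The plan is to apply Corollary~2.6 throughout: $G$ is $\mathcal{N}_c$-capable iff, for every non-identity $x\in G$, the natural homomorphism $\mathcal{N}_cM(G)\to\mathcal{N}_cM(G/\langle x\rangle)$ has non-trivial kernel. Both Baer invariants are to be read off from Theorem~2.7 specialized to $t=1$ and $c_1=c$, so the relevant sequence is $f_i=\chi_{c+1}(i)$, and I shall repeatedly use the elementary facts $f_0=f_1=0$ and strict monotonicity $f_i<f_{i+1}$ for every $i\ge 1$. Whenever both Baer invariants are finite, Corollary~2.5 will let me replace ``non-trivial kernel'' by the order inequality $|\mathcal{N}_cM(G)|\ne|\mathcal{N}_cM(G/\langle x\rangle)|$.

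For the forward direction (capable implies $\mathcal{N}_c$-capable) there are two cases. First, suppose $m\ge 2$. If $x\ne 0$ has infinite order, then $G/\langle x\rangle$ has $\mathbb{Z}$-rank $m-1$, so Theorem~2.7 drops the free rank of the multiplier from $f_m$ to $f_{m-1}<f_m$; passing to torsion-free quotients forces a non-trivial kernel. If $x\ne 0$ has finite order, then $G/\langle x\rangle\cong\mathbb{Z}^{(m)}\oplus T'$ for a proper quotient $T'$ of the torsion part $T$; by the structure theorem for finite abelian groups every invariant factor of $T'$ divides the corresponding one of $T$, with at least one strictly smaller. Since the exponents $f_{m+i}-f_{m+i-1}$ are all strictly positive for $m\ge 2$ and $i\ge 1$, the torsion subgroup of $\mathcal{N}_cM(G/\langle x\rangle)$ is strictly smaller than that of $\mathcal{N}_cM(G)$, so the restriction of the natural map to torsion cannot be injective. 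Second, suppose $m=0$, $k\ge 2$ and $n_1=n_2$. Then $G$ is finite and Corollary~2.5 applies. For $x\ne 0$ write $G/\langle x\rangle\cong\mathbb{Z}_{n'_1}\oplus\cdots\oplus\mathbb{Z}_{n'_k}$ with $n'_{i+1}\mid n'_i\mid n_i$. Equality of orders would force $n'_i=n_i$ for every $i\ge 2$ (since the exponents $f_i-f_{i-1}$ with $i\ge 2$ are positive), and then $n'_1\ge n'_2=n_2=n_1$ combined with $n'_1\mid n_1$ forces $n'_1=n_1$, so $G/\langle x\rangle\cong G$ and $x=0$, a contradiction.

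For the reverse direction, in each non-capable case I will exhibit a non-identity $x$ for which the natural map is injective. If $G$ is cyclic (i.e.\ $m+k=1$), then $\mathcal{N}_cM(G)=0$ by Theorem~2.7 and every map out of it is vacuously injective. If $m=1$ and $k\ge 1$, take $x=(n_1,0,\ldots,0)\in\mathbb{Z}\oplus T$: the quotient is $\mathbb{Z}_{n_1}\oplus\mathbb{Z}_{n_1}\oplus\mathbb{Z}_{n_2}\oplus\cdots\oplus\mathbb{Z}_{n_k}$, and applying Theorem~2.7 to both sides together with $f_0=f_1=0$ yields $\mathcal{N}_cM(G)\cong\mathcal{N}_cM(G/\langle x\rangle)$, so Corollary~2.5 delivers injectivity. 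Finally, if $m=0$, $k\ge 2$ and $n_1>n_2$, take $x=(n_2,0,\ldots,0)\in\mathbb{Z}_{n_1}\oplus\mathbb{Z}_{n_2}\oplus\cdots\oplus\mathbb{Z}_{n_k}$; since $n_2\mid n_1$ and $n_2<n_1$, the quotient is $\mathbb{Z}_{n_2}\oplus\mathbb{Z}_{n_2}\oplus\mathbb{Z}_{n_3}\oplus\cdots\oplus\mathbb{Z}_{n_k}$, and Theorem~2.7 again gives $\mathcal{N}_cM(G)\cong\mathcal{N}_cM(G/\langle x\rangle)$, so Corollary~2.5 concludes.

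The most delicate step will be the finite-order sub-case of the forward direction when $m\ge 2$: one must convert the qualitative observation that $T/\langle x\rangle$ differs from $T$ in at least one invariant factor into a quantitative strict decrease of the torsion part of $\mathcal{N}_cM$. This rests on the strict positivity of the differences $f_{m+i}-f_{m+i-1}$ for $m\ge 2$, $i\ge 1$, an elementary property of the Witt-type function $\chi_{c+1}$; Theorem~2.8, applied to $\langle x\rangle\le T$, supplies the structural control over $\langle x\rangle$ that makes the invariant-factor comparison precise.
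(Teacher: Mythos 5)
Your proof is correct and follows essentially the same route as the paper: although the paper quotes this theorem from Burns--Ellis without proof, it re-derives it as the $t=1$ case of Theorems 3.7 and 3.8, whose (sketched) arguments use exactly your choices of normal subgroup ($\langle(n_1,\bar 0,\ldots,\bar 0)\rangle$ when $m=1$, $\langle(\bar{n_2},\bar 0,\ldots,\bar 0)\rangle$ when $n_1\neq n_2$), the same specialization $f_0=f_1=0$, $f_2>0$ of Theorem 2.7, and the same torsion-free-rank and order comparisons via Corollary 2.5 and Theorem 2.8 as in Lemmas 3.3--3.6. No gaps worth flagging.
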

According to the above theorem, the condition for a finitely
generated abelian group to be $\mathcal{N}_c$-capable is just the
condition for capability of it. It is of interest to know which
varieties behave like as nilpotent varieties in case of capability
of finitely generated abelian groups. Answering this question in
general case need much more information, but in case of some outer
commutator varieties, more precisely polynilpotent varieties, we intend to
answer it. In the rest we see that not all polynilpotent varieties
behave like as nilpotent varieties in case of capability, but a
large class of them do. To show
varieties which not behave so we give the following example.
\begin{example}
Let $G\cong \mathbb{Z}_n\oplus
\mathbb{Z}_n$, where $n\in \mathbb{N}$. One may easily show that $G$ and
all its quotients are at most two-generator abelian groups, so by
Theorem 2.7 $\mathcal{S}_2M(G/N)=0$ for any normal subgroup of $G$, $N$
say, in which $\mathcal{S}_2$ is the variety of metabelian groups. Now
Theorem 2.4 implies $G$ is not $\mathcal{S}_2$-capable.
\end{example}
Now the main goal of this article is presented, which is
classifying all $\mathcal{N}_{c_1,c_2,\ldots,c_t}$-capable groups in
the class of finitely generated abelian groups.
According to Theorem 2.7 if $t\geq 2$ and $c_1=1$, then $\mathcal{N}_{c_1,c_2,\ldots,c_t}M(G)=0$ whenever $G$ has at most two
generators, but if \ $t=1$\  or\  $c_1\geq 2$\ , then  $\mathcal{N}_{c_1,c_2,\ldots,c_t}M(G)=0$ only when $G$ is cyclic. Hence we
need to separate these two cases, and the rest of work shows
the results are different.

We only state the proof in the case $t\geq 2$ and $c_1=1$, the proof of
the other case is similar. Just for the sake of clarity
we separate the procedure in two cases; finite and infinite.

Case one: Finite case.\\
Throughout the following two lemmas we assume that $t\geq 2$ and $c_1=1$.
\begin{lem}
Let $G\cong \mathbb{Z}_{n_1}\oplus\ldots\oplus \mathbb{Z}_{n_k}$ be a finite abelian group,
where $n_{i+1}\mid n_i$ for all $1\leq i \leq k-1$. If $G$ is $\mathcal{N}_{c_1,c_2,\ldots,c_t}$-capable then
$k\geq 3$ and $n_1=n_2=n_3$.
\end{lem}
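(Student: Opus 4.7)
The plan is to prove the contrapositive, producing, under the assumption that $G$ fails one of the stated conditions, a nonzero $x\in G$ for which the natural map $\mathcal{N}_{c_1,\ldots,c_t}M(G)\to\mathcal{N}_{c_1,\ldots,c_t}M(G/\langle x\rangle)$ is injective; by Corollary~2.6 this will contradict $\mathcal{N}_{c_1,\ldots,c_t}$-capability. Since all multipliers in sight are finite, Corollary~2.5 lets me replace injectivity by equality of orders, and I will in fact verify this by writing down an isomorphism of the two multipliers.

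The engine is a vanishing observation about the integers
\[f_i=\chi_{c_t+1}\bigl(\chi_{c_{t-1}+1}\bigl(\cdots\chi_{c_1+1}(i)\cdots\bigr)\bigr)\]
appearing in Theorem~2.7. Because $c_1=1$ one has $\chi_{c_1+1}(0)=\chi_{c_1+1}(1)=0$ and $\chi_{c_1+1}(2)=\chi_2(2)=1$; because $c_2\ge 1$ and hence $\chi_{c_2+1}(1)=0$, propagation through the remaining compositions yields $f_0=f_1=f_2=0$. Consequently, for any finite abelian group $H\cong\mathbb{Z}_{m_1}\oplus\cdots\oplus\mathbb{Z}_{m_s}$ in invariant factor form, the formula of Theorem~2.7 collapses in its first three slots:
\[\mathcal{N}_{c_1,\ldots,c_t}M(H)\cong\mathbb{Z}_{m_3}^{(f_3)}\oplus\mathbb{Z}_{m_4}^{(f_4-f_3)}\oplus\cdots\oplus\mathbb{Z}_{m_s}^{(f_s-f_{s-1})}.\]

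From this formula the three conclusions follow through three choices of $x$. If $k\le 2$, then both $G$ and every quotient $G/\langle x\rangle$ have at most two generators, so the collapsed formula gives trivial multipliers on both sides and the natural map is trivially injective; hence $k\ge 3$. If $k\ge 3$ but $n_1>n_2$, I take $x=(n_2,0,\ldots,0)$, a nonzero element of order $n_1/n_2$; then $G/\langle x\rangle$ has invariant factor decomposition $(n_2,n_2,n_3,\ldots,n_k)$, and the collapsed formula returns the same multiplier as $G$. If $n_1=n_2$ but $n_2>n_3$, I take $x=(n_3,0,\ldots,0)$ of order $n_1/n_3$; the invariant factors of $G/\langle x\rangle$ become $(n_1,n_3,n_3,n_4,\ldots,n_k)$, and once more the vanishing of $f_1$ and $f_2$ forces the multiplier of the quotient to match $\mathcal{N}_{c_1,\ldots,c_t}M(G)$.

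The main obstacle is bookkeeping: for each choice of $x$ I must rewrite $G/\langle x\rangle$ in invariant factor form so that the divisibility hypothesis of Theorem~2.7 holds before applying the multiplier formula, and then match the two telescoping direct sums term by term. Once $f_0=f_1=f_2=0$ is in hand, this matching is entirely mechanical, and the resulting isomorphism of multipliers contradicts capability in every case, leaving only $k\ge 3$ with $n_1=n_2=n_3$.
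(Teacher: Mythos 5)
Your proposal is correct and follows essentially the same route as the paper: argue the contrapositive, choose a cyclic subgroup $N=\langle x\rangle$ so that $G/N$ has the same invariant factors from the third slot on, and use the vanishing $f_0=f_1=f_2=0$ (implicit in the paper, explicit in your write-up) together with Theorem 2.7, Corollary 2.5 and Theorem 2.4 to get an injective map on multipliers. The only cosmetic difference is that in the case $n_1=n_2>n_3$ you quotient the first coordinate by $n_3$ where the paper quotients the second, which yields the same invariant factor list $(n_1,n_3,n_3,n_4,\ldots,n_k)$.
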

\begin{proof}
By contrary assume we have $k\leq 2$
or not $n_1=n_2=n_3$. If $k\leq 2$ then similar to Example 3.1 $G$
and all its quotients are at most two-generator abelian groups so by
Theorem 2.7 $\mathcal{N}_{c_1,c_2,\ldots,c_t}M(G/N)=0$ for any normal
subgroup $N$ of $G$, so by Theorem 2.4 $G$ is not $\mathcal{N}_{c_1,c_2,\ldots,c_t}$-capable. On the other hand if, for example,
$n_1\neq n_2$, we introduce a normal subgroup of $G$, $N$ say; for
which the natural homomorphism $\mathcal{N}_{c_1,c_2,\ldots,c_t}M(G)\longrightarrow \mathcal{N}_{c_1,c_2,\ldots,c_t}M(G/N)$ is injective. Put $N$ the subgroup generated by $({\bar n_2},{\bar 0},\ldots,{\bar 0})$. It is easy to
see that $N\cong \mathbb{Z}_{\frac {n_1}{n_2}}$ and $G/N\cong \mathbb{Z}_{n_2}\oplus \mathbb{Z}_{n_2}\ldots\oplus \mathbb{Z}_{n_k}$. Now Theorem 2.7 shows that $|\mathcal{N}_{c_1,c_2,\ldots,c_t}M(G)|=|\mathcal{N}_{c_1,c_2,\ldots,c_t}M(G/N)|$, hence by Corollary 2.5  we have the
injectivity of $\mathcal{N}_{c_1,c_2,\ldots,c_t}M(G)\longrightarrow
\mathcal{N}_{c_1,c_2,\ldots,c_t}M(G/N)$. So the result holds by Theorem
2.4. The second case for which we have $n_1=n_2\neq n_3$ has a
similar proof by putting $N=<({\bar 0},{\bar n_3},{\bar
0},\ldots,{\bar 0})>$.
\end{proof}
The above lemma presents a necessary condition for a finite
abelian group to be $\mathcal{N}_{c_1,c_2,\ldots,c_t}$-capable. The
following shows it is sufficient.
\begin{lem}
Let $G\cong \mathbb{Z}_{n_1}\oplus\ldots\oplus \mathbb{Z}_{n_k}$ be a finite abelian group,
where $n_{i+1}\mid n_i$ for all $1\leq i \leq k-1$ in which $k\geq
3$ and $n_1=n_2=n_3$. Then $G$ is $\mathcal{N}_{c_1,c_2,\ldots,c_t}$-capable.
\end{lem}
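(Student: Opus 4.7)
The plan is to verify the criterion of Corollary 2.6: it suffices to show that for every non-identity $x \in G$, the natural homomorphism $\mathcal{N}_{c_1,c_2,\ldots,c_t}M(G)\to\mathcal{N}_{c_1,c_2,\ldots,c_t}M(G/\langle x\rangle)$ has non-trivial kernel. Because $G$ is abelian and $c_1=1$ forces $V(G)=1$, and because both multipliers are finite by Theorem 2.7, Corollary 2.5 reduces this to establishing the strict inequality $|\mathcal{N}_{c_1,c_2,\ldots,c_t}M(G/\langle x\rangle)|<|\mathcal{N}_{c_1,c_2,\ldots,c_t}M(G)|$ for every such $x$.

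Fix a non-identity $x$ and let $(d_1,d_2,\ldots,d_l)$, with $d_{i+1}\mid d_i$, be the invariant factors of $G/\langle x\rangle$, padded by $d_i=1$ for $i>l$. Since $G/\langle x\rangle$ is a quotient of $G\cong\mathbb{Z}_{n_1}\oplus\cdots\oplus\mathbb{Z}_{n_k}$, a dualisation of Theorem 2.8 (or a direct Smith-normal-form argument) yields $d_i\mid n_i$, hence $d_i\leq n_i$, for every $i$. Applying Theorem 2.7 to both groups gives
\[
\frac{|\mathcal{N}_{c_1,\ldots,c_t}M(G)|}{|\mathcal{N}_{c_1,\ldots,c_t}M(G/\langle x\rangle)|}=\prod_{i=1}^{k}\left(\frac{n_i}{d_i}\right)^{f_i-f_{i-1}}\geq 1,
\]
with strict inequality precisely when some index $i$ satisfies both $d_i<n_i$ and $f_i>f_{i-1}$.

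I would next analyse the integers $f_i$. The identities $\chi_2(0)=\chi_2(1)=0$ and $\chi_c(1)=0$ for $c\geq 2$ force $f_0=f_1=f_2=0$; for $i\geq 3$, $\chi_2(i)\geq 3$, and each $\chi_{c_j+1}$ is strictly monotone on positive integers and sends values $\geq 3$ to values $\geq 3$, so $f_i>f_{i-1}$ for every $i\geq 3$. Hence strict inequality of multiplier orders reduces to exhibiting some $i\geq 3$ with $d_i<n_i$.

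To finish, I would derive a contradiction from the opposite assumption, namely $d_i=n_i$ for all $i\geq 3$. If $l<k$, then the padded value $d_k=1$ contradicts $n_k\geq 2$; hence $l=k$. From $d_3=n_3=n_1$ together with $d_3\mid d_2\mid d_1$ and $d_1,d_2\leq n_1$, one forces $d_1=d_2=n_1$, whence $|G/\langle x\rangle|=\prod d_i=\prod n_i=|G|$, contradicting $\langle x\rangle\neq 1$. The main obstacle is the initial divisibility $d_i\mid n_i$, a classical complement to Theorem 2.8 that may deserve brief justification; once it is in hand, the condition $n_1=n_2=n_3$ is exactly what forces the inevitable decrease in invariant factors to land at an index $\geq 3$, where the function $f$ has started to be strictly increasing.
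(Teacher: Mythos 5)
Your proposal is correct and follows essentially the same route as the paper: reduce to comparing orders of the polynilpotent multipliers via Corollary 2.5 and Theorem 2.7, use the componentwise divisibility of the invariant factors of a quotient (the paper gets this by passing to an isomorphic subgroup and citing Theorem 2.8), note that $f_1=f_2=0$ while $f_i>f_{i-1}$ for $i\geq 3$, and use $n_1=n_2=n_3$ to force any proper quotient to drop an invariant factor at some index $\geq 3$. The only cosmetic difference is that you restrict to cyclic subgroups via Corollary 2.6, whereas the paper runs the same argument for an arbitrary normal subgroup $N$ via Theorem 2.4.
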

\begin{proof}
By Theorem 2.4 it is enough to show
that for an arbitrary normal subgroup $N$ of $G$, $\mathcal{N}_{c_1,c_2,\ldots,c_t}M(G)\lra \mathcal{N}_{c_1,c_2,\ldots,c_t}M(G/N)$
is not injective. In finite abelian groups each quotient is
isomorphic to a subgroup and vice versa. Now let $N$ be an arbitrary
normal subgroup of $G$, then $G/N$ is isomorphic to a subgroup of
$G$, $H$ say; so by Theorem 2.8 $H\cong \mathbb{Z}_{m_1}\oplus\ldots\oplus \mathbb{Z}_{m_k}$, where $m_{i+1}\mid m_i$
for all $1\leq i \leq k-1$ and $m_i\mid n_i$ for all $1\leq i \leq
k$. Computing $\mathcal{N}_{c_1,c_2,\ldots,c_t}M(G)$ and $\mathcal{N}_{c_1,c_2,\ldots,c_t}M(H)$ using Theorem 2.7, shows that $|\mathcal{N}_{c_1,c_2,\ldots,c_t}M(G)|=|\mathcal{N}_{c_1,c_2,\ldots,c_t}M(H)|$ if
and only if $m_i=n_i$ for all $3\leq i \leq k$, but $n_1=n_2=n_3$ by
hypothesis which implies $n_1=m_1$ and $n_2=m_2$ which is equivalent
to $H=G$ or $N=0$. Now by Theorem 2.4 $G$ is $\mathcal{N}_{c_1,c_2,\ldots,c_t}$-capable.
\end{proof}
Case two: Infinite case.\\
Similar to the previous case, through the following two lemmas we
assume that $t\geq 2$ and $c_1=1$.
\begin{lem}
Let $G\cong \mathbb{Z}^{(m)}\oplus
\mathbb{Z}_{n_1}\oplus\ldots\oplus \mathbb{Z}_{n_k}$ be an infinite
finitely generated abelian group, where $n_{i+1}\mid n_i$ for all
$1\leq i \leq k-1$. If $G$ is
$\mathcal{N}_{c_1,c_2,\ldots,c_t}$-capable then $m\geq 3$.
\end{lem}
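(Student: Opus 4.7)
The natural strategy is a contrapositive argument: assuming $m\leq 2$, I would exhibit a non-trivial normal subgroup $N\trianglelefteq G$ for which the induced map $\mathcal{N}_{c_1,\ldots,c_t}M(G)\longrightarrow \mathcal{N}_{c_1,\ldots,c_t}M(G/N)$ is injective. By Theorem~2.4 this forces $N\subseteq V^{**}(G)$, so $V^{**}(G)\neq 1$ and $G$ is not $\mathcal{N}_{c_1,\ldots,c_t}$-capable, contradicting the hypothesis.

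A preliminary step records the key numerical fact that in the present regime ($t\geq 2$, $c_1=1$) the integers $f_i$ of Theorem~2.7 satisfy $f_0=f_1=f_2=0$. Indeed $\chi_2(i)=i(i-1)/2$ gives $\chi_2(0)=\chi_2(1)=0$ and $\chi_2(2)=1$, while the Witt formula yields $\chi_r(0)=0$ and $\chi_r(1)=\frac{1}{r}\sum_{d\mid r}\mu(d)=0$ for every $r\geq 2$; a single further application of $\chi_{c_2+1}$ therefore annihilates all three values, and the remaining $\chi$'s preserve $0$. The degenerate case $k=0$ is then immediate: $G\cong\mathbb{Z}^{(m)}$ with $m\in\{1,2\}$ gives $\mathcal{N}_{c_1,\ldots,c_t}M(G)\cong\mathbb{Z}^{(f_m)}=0$ by Theorem~2.7, so the induced map into the multiplier of any quotient is vacuously injective.

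For the main case $k\geq 1$, the candidate is $N:=\langle(n_1,0,\ldots,0)\rangle$, where $n_1$ sits in the first free $\mathbb{Z}$-summand; then $N\cong\mathbb{Z}$ and
$$G/N\cong \mathbb{Z}^{(m-1)}\oplus \mathbb{Z}_{n_1}\oplus \mathbb{Z}_{n_1}\oplus \mathbb{Z}_{n_2}\oplus \cdots \oplus \mathbb{Z}_{n_k},$$
again in invariant-factor form, with free rank dropped by one and an extra $\mathbb{Z}_{n_1}$ inserted at the top of the torsion part. Plugging both $G$ and $G/N$ into Theorem~2.7 and using $f_0=f_1=f_2=0$, the free components of both multipliers vanish and the torsion exponent sequences collapse to exactly the same tuple (the extra $\mathbb{Z}_{n_1}$ in $G/N$ acquires exponent $f_2-f_1=0$, and every subsequent block matches term by term with that of $G$). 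In particular the two multipliers are finite and of equal order, and Corollary~2.5 delivers the desired injectivity.

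The main obstacle I foresee is the bookkeeping in this last step: one must carefully check that, under the shift $m\mapsto m-1$ and the insertion of an additional copy of $\mathbb{Z}_{n_1}$, the two Theorem~2.7 outputs really coincide term by term, so that the vanishing of $f_1,f_2$ lines up to kill precisely the newly created $\mathbb{Z}_{n_1}$-block in $G/N$. Apart from this verification, the argument is a direct reuse of the contrapositive pattern already exploited in Lemma~3.3 together with the structural machinery recalled in Section~2.
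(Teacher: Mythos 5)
Your proposal is correct and follows essentially the same route as the paper: assume $m\leq 2$, take $N$ generated by $n_1$ times a free generator so that $G/N\cong\mathbb{Z}^{(m-1)}\oplus\mathbb{Z}_{n_1}\oplus\mathbb{Z}_{n_1}\oplus\cdots\oplus\mathbb{Z}_{n_k}$, and use Theorem~2.7 together with $f_0=f_1=f_2=0$ to see that both multipliers are finite of equal order, whence Corollary~2.5 and Theorem~2.4 finish the argument. The only (harmless) differences are that you treat $m=1$ and $m=2$ uniformly where the paper splits them, and you explicitly dispose of the $k=0$ case, which the paper leaves implicit.
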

\begin{proof}
We show that if $m\leq 2$, then there
exists a normal subgroup $N$ of $G$ for which the natural
homomorphism $\mathcal{N}_{c_1,c_2,\ldots,c_t}M(G)\longrightarrow \mathcal{N}_{c_1,c_2,\ldots,c_t}M(G/N)$ is injective and hence the result
follows by Theorem 2.4.
Suppose $m=2$, then $G\cong \mathbb{Z}\oplus \mathbb{Z}\oplus \mathbb{Z}_{n_1}\oplus\ldots\oplus \mathbb{Z}_{n_k}$. Put $N$ to be the
subgroup generated by $(0,n_1,{\bar 0},\ldots,{\bar 0})$ so $N\cong n_1\mathbb{Z}$ and $G/N\cong \mathbb{Z}\oplus \mathbb{Z}_{n_1}\oplus \mathbb{Z}_{n_1}\oplus\ldots\oplus \mathbb{Z}_{n_k}$. Now by Theorem 2.7 we
have $|\mathcal{N}_{c_1,c_2,\ldots,c_t}M(G)|=|\mathcal{N}_{c_1,c_2,\ldots,c_t}M(G/N)|$. Using Theorems 2.4 and Corollary 2.5 the
result will follow. If $m=1$ then put $N=<(n_1,{\bar
0},\ldots,{\bar 0})>$ and use a similar proof.
\end{proof}
The reverse of the above lemma can be presented as follows.
\begin{lem}
Let $G\cong \mathbb{Z}^{(m)}\oplus
\mathbb{Z}_{n_1}\oplus\ldots\oplus \mathbb{Z}_{n_k}$ be an infinite
finitely generated abelian group, where $n_{i+1}\mid n_i$ for all
$1\leq i \leq k-1$. If $m\geq 3$
then $G$ is $\mathcal{N}_{c_1,c_2,\ldots,c_t}$-capable.
\end{lem}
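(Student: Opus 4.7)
The plan is to apply Corollary 2.6: it suffices to show that for every non-identity $x\in G$ the natural map $\mathcal{N}_{c_1,c_2,\ldots,c_t}M(G) \lra \mathcal{N}_{c_1,c_2,\ldots,c_t}M(G/\langle x\rangle)$ has non-trivial kernel. Theorem 2.7 makes both groups completely explicit, so I would compute each side and then compare, splitting on whether $x$ has infinite or finite order.

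\textbf{If $x$ has infinite order}, then $G/\langle x\rangle$ has torsion-free rank $m-1 \geq 2$, so Theorem 2.7 gives the two multipliers free ranks $f_m$ and $f_{m-1}$ respectively. The key subclaim is $f_{m-1} < f_m$ for $m \geq 3$. Under the hypothesis $c_1 = 1$ one has $\chi_2(2) = 1$ and $\chi_2(j) \geq 3$ for all $j \geq 3$; by the Witt formula $\chi_c(n) = \frac{1}{c}\sum_{d\mid c}\mu(d)n^{c/d}$ each subsequent function $\chi_{c_i+1}$ (with $c_i+1 \geq 2$) sends $1$ to $0$, is strictly increasing on $[3,\infty)$, and maps $[3,\infty)$ into itself. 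Iterating yields $f_2 = 0 < f_3$ and $f_{m-1} < f_m$ for all $m \geq 4$. Since an injective homomorphism of finitely generated abelian groups cannot increase the torsion-free rank, the induced map on multipliers has non-trivial kernel.

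\textbf{If $x$ has finite order}, then $x$ lies in the torsion subgroup $T \cong \mathbb{Z}_{n_1}\oplus\cdots\oplus\mathbb{Z}_{n_k}$ of $G$ and $G/\langle x\rangle \cong \mathbb{Z}^{(m)}\oplus T/\langle x\rangle$. The finite abelian quotient $T/\langle x\rangle$ embeds (by self-duality of finite abelian groups) into $T \leq G$, so Theorem 2.8 yields $T/\langle x\rangle \cong \mathbb{Z}_{n_1'}\oplus\cdots\oplus\mathbb{Z}_{n_{k'}'}$ with $n_{i+1}'\mid n_i'$, $n_i'\mid n_i$, and $k' \leq k$. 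By Theorem 2.7 both multipliers share the same free rank $f_m$, while their torsion subgroups have orders $\prod_{i=1}^{k} n_i^{f_{m+i}-f_{m+i-1}}$ and $\prod_{i=1}^{k} (n_i')^{f_{m+i}-f_{m+i-1}}$ (adopting the convention $n_i' = 1$ for $i > k'$); the ratio
\[
\prod_{i=1}^{k}\left(\frac{n_i}{n_i'}\right)^{f_{m+i}-f_{m+i-1}} \;\geq\; \prod_{i=1}^{k}\frac{n_i}{n_i'} \;=\; |\langle x\rangle| \;\geq\; 2
\]
uses $f_{m+i}-f_{m+i-1} \geq 1$ from the monotonicity established above (the indices $m+i-1$ lie in $[m,\infty)\subseteq[3,\infty)$). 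Equal free ranks but strictly larger source torsion force the induced homomorphism to have non-trivial kernel, and Corollary 2.6 concludes.

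The main technical obstacle is the monotonicity bookkeeping for $(f_i)$: once the innermost $\chi_2$ is evaluated at an integer $\geq 3$, all subsequent values stay $\geq 3$ and strictly increase, but the boundary case $\chi_2(2) = 1$ (leading to $f_2 = 0$) must be handled by hand — which is precisely why $m = 2$ is excluded in Lemma 3.4. Beyond this, the argument is a routine rank-and-order comparison driven by Theorems 2.7 and 2.8.
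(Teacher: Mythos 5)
Your proof is correct and follows essentially the same route as the paper's: split according to whether the quotienting subgroup is infinite or finite, kill injectivity in the first case by the drop in torsion-free rank $f_{m-1}<f_m$ and in the second by comparing torsion parts via Theorems 2.7 and 2.8. The only cosmetic differences are that you invoke Corollary 2.6 to reduce to cyclic subgroups $\langle x\rangle$ (the paper works with arbitrary normal $N$ via Theorem 2.4) and that you rule out the finite case by an explicit order count of the torsion subgroups where the paper argues that injectivity would force $m_i=n_i$ and hence $N=0$.
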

\begin{proof}
Similar to Lemma 3.4 it is enough to
show that there is no normal subgroup $N$ of $G$ for which $\mathcal{N}_{c_1,c_2,\ldots,c_t}M(G)\lra \mathcal{N}_{c_1,c_2,\ldots,c_t}M(G/N)$
is injective. Consider two cases
$(i)$  $N$ is infinite; and
$(ii)$ $N$ is finite.

In case $(i)$ trivially  $r_0(G/N)<r_0(G)$, now by Theorem 2.7 $r_0(\mathcal{N}_{c_1,c_2,\ldots,c_t}M(G/N))<r_0(\mathcal{N}_{c_1,c_2,\ldots,c_t}M(G))$. So there is no injection between $\mathcal{N}_{c_1,c_2,\ldots,c_t}M(G)$ and\linebreak $\mathcal{N}_{c_1,c_2,\ldots,c_t}M(G/N)$.
In case $(ii)$ $N\leq t(G)$ and hence $G/N\cong \mathbb{Z}^{(m)}\oplus
\mathbb{Z}_{m_1}\oplus\ldots\oplus \mathbb{Z}_{m_k}$ where $m_{i+1}\mid
m_i$ and $m_i\mid n_i$ for all $1\leq i \leq k-1$, now by Theorem
2.7 we have $\mathcal{N}_{c_1,c_2,\ldots,c_t}M(G)\cong \mathbb{Z}^{(f_m)}\oplus \mathbb{Z}_{n_1}^{(f_{m+1}-f_m)}\oplus\ldots\oplus
\mathbb{Z}_{n_k}^{(f_{m+k}-f_{m+k-1})},$ and $\mathcal{N}_{c_1,c_2,\ldots,c_t}M(G/N)\cong \mathbb{Z}^{(f_m)}\oplus \mathbb{Z}_{m_1}^{(f_{m+1}-f_m)}\oplus\ldots\oplus \mathbb{Z}_{m_k}^{(f_{m+k}-f_{m+k-1})}$. Now by contrary assume that the natural homomorphism $\mathcal{N}_{c_1,c_2,\ldots,c_t}M(G)\longrightarrow \mathcal{N}_{c_1,c_2,\ldots,c_t}M(G/N)$ which will be called $f$ through
the proof, is injective. It is easy to show that $t(\mathcal{N}_{c_1,c_2,\ldots,c_t}M(G))=\mathbb{Z}_{n_1}^{(f_{m+1}-f_m)}\oplus\ldots\oplus \mathbb{Z}_{n_k}^{(f_{m+k}-f_{m+k-1})}$ and $t(\mathcal{N}_{c_1,c_2,\ldots,c_t}M(G/N))=\mathbb{Z}_{m_1}^{(f_{m+1}-f_m)}\oplus\ldots\oplus \mathbb{Z}_{m_k}^{(f_{m+k}-f_{m+k-1})}$. Also we have $f(t(\mathcal{N}_{c_1,c_2,\ldots,c_t}M(G)))\subseteq t(\mathcal{N}_{c_1,c_2,\ldots,c_t}M(G/N))$. As $f$ is injective we must have
$m_i=n_i$ for all $1\leq i \leq k$ so $t(G)=t(G/N)$ but
$t(G/N)=t(G)/N$ and hence $N=0$ which completes the proof.
\end{proof}
The latest lemmas classify all finitely generated abelian groups
which are $\mathcal{N}_{c_1,c_2,\ldots,c_t}$-capable, provided that
$t\geq 2$ and $c_1=1$. Of the most famous examples of these
varieties are $\mathcal{S}_{\ell}$'s, the varieties of
soluble groups of length at most ${\ell}$.

The case $c_1\geq 2$ or $t=1$ has different results but similar
proof, so the proof are shortened. Examples of these varieties are $\mathcal{N}_c$'s and $\mathcal{A}$,
which are the varieties of nilpotent groups of class at most $c$
and abelian groups, respectively, for the case $t=1$, and polynilpotent varieties
which are not solvable for the case $c_1\geq 2$.
The following theorems are stated with the conditions $c_1\geq 2$
or $t=1$.
\begin{thm}
Let $G\cong \mathbb{Z}_{n_1}\oplus\ldots\oplus \mathbb{Z}_{n_k}$ be a finite abelian group,
where $n_{i+1}\mid n_i$ for all $1\leq i \leq k-1$. Then $G$ is
$\mathcal{N}_{c_1,c_2,\ldots,c_t}$-capable if and
only if $k\geq 2$ and $n_1=n_2$.
\end{thm}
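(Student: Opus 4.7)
The plan is to mirror the finite-case Lemmas 3.3 and 3.4, with the only genuine modification coming from the fact that in the present regime (where $c_1\ge 2$ or $t=1$) Theorem 2.7 gives $f_1=0$ but $f_i-f_{i-1}>0$ already for $i=2$, rather than $f_1=f_2=0$ as in case one. This is exactly the structural input noted by the authors, namely that the polynilpotent multiplier of a finitely generated abelian group now vanishes only on cyclic groups, and it is what shifts the capability threshold from $k\ge 3,\ n_1=n_2=n_3$ down to $k\ge 2,\ n_1=n_2$.

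For the necessity direction I would first dispose of the cyclic case $k\le 1$: every quotient of $G$ is then cyclic, so Theorem 2.7 forces all the multipliers in sight to be trivial, the natural maps become (trivially) injective, and Theorem 2.4 rules out $\mathcal{N}_{c_1,\ldots,c_t}$-capability. If instead $k\ge 2$ but $n_1\ne n_2$, I would imitate Lemma 3.3 and take $N=\langle(\bar n_2,\bar 0,\ldots,\bar 0)\rangle$, giving $G/N\cong\mathbb{Z}_{n_2}\oplus\mathbb{Z}_{n_2}\oplus\mathbb{Z}_{n_3}\oplus\cdots\oplus\mathbb{Z}_{n_k}$ in invariant-factor form; since $f_1=0$ still holds, a direct count via Theorem 2.7 yields $|\mathcal{N}_{c_1,\ldots,c_t}M(G)|=|\mathcal{N}_{c_1,\ldots,c_t}M(G/N)|$, so Corollary 2.5 delivers injectivity and Theorem 2.4 rules out capability.

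For the sufficiency direction I would adapt Lemma 3.4: assuming $k\ge 2$ and $n_1=n_2$, take any nontrivial normal subgroup $N\le G$, realise $G/N$ as a subgroup $H\cong\mathbb{Z}_{m_1}\oplus\cdots\oplus\mathbb{Z}_{m_k}$ of $G$ via Theorem 2.8, and compare orders of multipliers via Theorem 2.7. The essential (and main) step is that because $f_i-f_{i-1}>0$ already for $i=2$ in this regime, equality of orders now forces $m_i=n_i$ for \emph{all} $2\le i\le k$; coupling this with $m_1\mid n_1=n_2=m_2\le m_1$ gives $m_1=n_1$ as well, so $H=G$ and $N=0$, contradicting nontriviality. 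Hence the natural map has nontrivial kernel for every proper quotient, and Theorem 2.4 concludes that $G$ is $\mathcal{N}_{c_1,\ldots,c_t}$-capable.
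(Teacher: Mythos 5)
Your proposal is correct and takes essentially the same approach as the paper: the paper's own proof is just the remark that the arguments of Lemmas 3.3 and 3.4 carry over once one notes that here $\mathcal{N}_{c_1,\ldots,c_t}M(G)\cong\mathbb{Z}_{n_2}^{(f_2)}\oplus\mathbb{Z}_{n_3}^{(f_3-f_2)}\oplus\cdots\oplus\mathbb{Z}_{n_k}^{(f_k-f_{k-1})}$, i.e.\ exactly the observation ($f_1=0$ but $f_i-f_{i-1}>0$ for $i\ge 2$) on which you base both directions. You simply supply the details (the choice $N=\langle(\bar n_2,\bar 0,\ldots,\bar 0)\rangle$, the order count via Corollary 2.5, and the deduction $m_1=m_2=n_1=n_2$) that the paper leaves implicit.
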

\begin{proof}
The proof is completely similar to the
proofs of Lemmas 3.3 and 3.4. The only matter which may be
considered is the fact that for an arbitrary finite abelian group
$G\cong \mathbb{Z}_{n_1}\oplus\ldots\oplus \mathbb{Z}_{n_k}$ we have
$\mathcal{N}_{c_1,c_2,\ldots,c_t}M(G)\cong \mathbb{Z}_{n_2}^{(f_2)}\oplus
\mathbb{Z}_{n_3}^{(f_3-f_2)} \ldots\oplus \mathbb{Z}_{n_k}^{(f_k-f{k-1})}$.
\end{proof}
\begin{thm}
Let $G\cong \mathbb{Z}^{(m)}\oplus
\mathbb{Z}_{n_1}\oplus\ldots\oplus \mathbb{Z}_{n_k}$ be an infinite
finitely generated abelian group, where $n_{i+1}\mid n_i$ for all
$1\leq i \leq k-1$ is ${\cal
N}_{c_1,c_2,\ldots,c_t}$-capable if and only if $m\geq 2$.
\end{thm}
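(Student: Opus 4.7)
The plan is to repeat the strategy of Lemmas 3.5 and 3.6 almost verbatim, the only change being that under the standing hypothesis $c_1\geq 2$ or $t=1$ one has $f_1=0$ but $f_2>0$ (whereas $f_2=0$ in the complementary case $c_1=1$, $t\geq 2$). This shifts the sharp threshold on the torsion-free rank from $m\geq 3$ down to $m\geq 2$; everything else is structurally the same.

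For necessity, assume $m\leq 1$; since $G$ is infinite the only case to handle is $m=1$. I would choose $N=\langle(n_1,\bar 0,\ldots,\bar 0)\rangle\cong n_1\mathbb{Z}$ sitting inside the $\mathbb{Z}$-summand, so that
\[
G/N\;\cong\;\mathbb{Z}_{n_1}\oplus\mathbb{Z}_{n_1}\oplus\mathbb{Z}_{n_2}\oplus\cdots\oplus\mathbb{Z}_{n_k}
\]
is finite. Applying Theorem 2.7 to both $G$ and $G/N$ and using $f_1=0$, the two polynilpotent multipliers both collapse to
\[
\mathbb{Z}_{n_1}^{(f_2)}\oplus\mathbb{Z}_{n_2}^{(f_3-f_2)}\oplus\cdots\oplus\mathbb{Z}_{n_k}^{(f_{k+1}-f_k)},
\]
and are in particular finite of equal order. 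Corollary 2.5 then yields injectivity of the natural homomorphism $\mathcal{N}_{c_1,\ldots,c_t}M(G)\to\mathcal{N}_{c_1,\ldots,c_t}M(G/N)$, and Theorem 2.4 shows that $G$ is not $\mathcal{N}_{c_1,\ldots,c_t}$-capable.

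For sufficiency, assume $m\geq 2$ and let $N$ be a nontrivial normal subgroup of $G$; following Lemma 3.6, I split into the cases $N$ infinite and $N$ finite. If $N$ is infinite, then $r_0(G/N)<m$ and the strict monotonicity $f_{m-1}<f_m$ forces $r_0(\mathcal{N}_{c_1,\ldots,c_t}M(G/N))<r_0(\mathcal{N}_{c_1,\ldots,c_t}M(G))$ by Theorem 2.7, precluding injectivity. If $N$ is finite, then $N\leq t(G)$ and $G/N\cong\mathbb{Z}^{(m)}\oplus\mathbb{Z}_{m_1}\oplus\cdots\oplus\mathbb{Z}_{m_k}$ with $m_i\mid n_i$; the torsion subgroups of the two multipliers have matching block-rank patterns dictated by the exponents $f_{m+i}-f_{m+i-1}$, and injectivity on torsion forces $m_i=n_i$ for every $i$, hence $t(G/N)=t(G)$ and $N=0$, contradicting nontriviality. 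Hence no nontrivial $N$ makes the natural map injective, and $G$ is $\mathcal{N}_{c_1,\ldots,c_t}$-capable by Theorem 2.4. The one genuinely new point to verify is the strict monotonicity $f_{m-1}<f_m$ at $m=2$, which is precisely where the hypothesis $c_1\geq 2$ or $t=1$ (guaranteeing $f_2>0$) is used; this is the main obstacle, and it is exactly the reason the threshold drops from $3$ to $2$.
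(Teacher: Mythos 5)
Your proposal follows exactly the route the paper takes: the published proof of this theorem is a two-line remark deferring to Lemmas 3.5 and 3.6 and noting that the only change is $f_2\neq 0$ under the hypothesis $c_1\geq 2$ or $t=1$, and your expansion (the subgroup $n_1\mathbb{Z}$ of the free summand for $m=1$, the infinite/finite split on $N$ for $m\geq 2$) is precisely what those lemmas do with the threshold shifted. So the proposal is correct to the same level of rigor as the paper's own argument and takes essentially the same approach.
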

\begin{proof}
The proof is similar to Lemmas 3.5 and 3.6. The only fact may be considered is the structure of the
polynilpotent multipliers of $G$ and $G/N$ in the case $c_1\geq 2$
or $t=1$. More exactly the difference is the fact that we have
$f_2\neq 0$, so the torsion free rank of $\mathcal{N}_{c_1,c_2,\ldots,c_t}M(G)$ is zero exactly when $G$ has the
torsion free rank 1.
\end{proof}

\section*{Acknowledgments}
The authors are grateful to the referees for their careful works and suggestions that improved this paper.\\
The third author was in part supported by a grant from IPM (No. 83200037).


\end{document}